\newtheorem{thm}{Theorem}[section]
\newtheorem{prop}[thm]{Proposition}
\newtheorem{df}[thm]{Defintion}
\newtheorem{cor}[thm]{Corollary}
\newtheorem*{theorem2}{Theorem A}
\newtheorem{rem}[thm]{Remark}
\def\Z{\mathbb{Z}}
\def\R{\mathbb{R}}
\def\d{d}
\def\diam{\text{\rm diam}}
\def\over{{\rm {\rm \overline{{mdim}}}}_M(T,f,d)}
\def\s{S_nf(x)}
\def\f{f\in C(X,\mathbb{R})}
\def\logf{\log\frac{1}{\epsilon}}
\numberwithin{equation}{section}
\title{on variational principle  for upper metric mean dimension with potential}
\author{Rui Yang, Ercai Chen and Xiaoyao Zhou*
}
\address
{1.School of Mathematical Sciences and Institute of Mathematics, Nanjing Normal University, Nanjing 210023, Jiangsu, P.R.China}
\email{zkyangrui2015@163.com}
\email{ecchen@njnu.edu.cn}
\email{zhouxiaoyaodeyouxian@126.com}
\date{}
\begin{document}

\maketitle
\renewcommand{\thefootnote}{}
\footnote{2020 \emph{Mathematics Subject Classification}: 37A15, 37C45.}
\footnotetext{\emph{Key words and phrases}: Variational principle; Upper metric mean dimension with potential; Equilibrium states}
\footnote{*corresponding author}

\begin{abstract}
Borrowing  the  idea of  topological pressure determining measure-theoretical entropy in topological dynamical systems, we establish a variational principle for upper metric mean dimension with potential in terms of upper measure-theoretical metric mean dimension of invariant measures. Moreover, the notion of  equilibrium state  is introduced to characterize these measures  that attain the supremum of the variational principle.

\end{abstract}

\section{Introduction}

By a pair $(X,T)$ we mean a topological dynamical system (TDS for short), where $X$ is a  compact metrizable  topological space  and $T$ is a continuous self-map from $X$ to $X$.   We denote by $M(X),M(X,T)$  the sets of all Borel probability measures on $X$, all $T$-invariant Borel probability measures on $X$,  respectively. The continuous function space $C(X,\R)$ consisting of  all continuous real-valued functions on $X$ is  a normed linear space equipped with the supremum norm.

Mean topological dimension, introduced  by Gromov \cite{gromov},  is a  new  topological  invariant  for  topological dynamical systems.  Later,  Lindenstrauss and Weiss \cite{lw00}  introduced the concept of metric mean dimension  to capture the complexity of infinite topological entropy systems,  and  proved  that  metric mean dimension  is an upper bound of mean topological dimension. Since then,   pursuing the relation between  the  mean dimension theory   and other field  of dynamical systems  within the framework of infinite entropy theory  attracted abundant special attentions.     
It is  well-known that  the  classical variational principle \cite{rue73, wal75} states that topological pressure of a continuous potential  equals the supremum of measure-theoretical entropy plus  the integral  of the continuous potential taken over all invariant measures.  Readers can turn to  \cite{hy07,cfh08,lcc12,chz13,chu13,lmw18,bh20,hwz21} for  more  results  of this aspect, which involves  the variational principles of  topological pressure in the context of  different  dynamical systems.  This naturally leads to a question how to  establish  the proper variational principles for infinite entropy systems. Very recently,  Lindenstrauss  and Tsukamoto's pioneering work \cite{lt18} gave the first  analogue of classical variational  principle for  metric mean dimension. 
\begin{theorem2}
Let $(X,T)$ be a TDS with a metric $\d$. Then
\begin{align*}
{ \rm \overline{mdim}}_M(T,X,d)&=\limsup_{\epsilon \to 0}\frac{1}{\logf}\sup_{\mu \in M(X,T)}R_{\mu, L^{\infty}}(\epsilon),\\
{ \rm {\rm \underline{mdim}}}_M(T,X,d)&=\liminf_{\epsilon \to 0}\frac{1}{\logf}\sup_{\mu \in M(X,T)}R_{\mu, L^{\infty}}(\epsilon),
\end{align*}
  where ${\rm \overline{mdim}}_M(T,X,d), {\rm {\rm \underline{mdim}}}_M(T,X,d)$ are upper and lower metric mean dimensions of $X$, respectively, and  $R_{\mu, L^{\infty}}(\epsilon)$  is  $L^{\infty}$-rate distortion dimension of $\mu$.
 \end{theorem2}
Furthermore,  the  aforementioned  variational principles   hold  if we replace $R_{\mu, L^{\infty}}(\epsilon)$  by Katok  entropy  \cite{vv17},  R$\bar{e}$nyi information function \cite{gs20},  Brin-Katok local entropy, Shapira  entropy and local entropy function \cite{shi}. For systems with marker  property,  Lindenstrauss and Tsukamoto \cite{lt19}  established  double variational principles for mean dimension, which was extended  to    mean dimension with potential by Tsukamoto  \cite{t20}.  See also \cite{ccl22,w22} for the  variational principles of metric mean dimension with potential. We aim  to  inject  ergodic theoretic ideas  into mean dimension theory by  obtaining  a new variational principle for upper metric mean dimension with potential. Compared with the classical variational principle for topological pressure, a  satisfactory  Lindenstrauss-Tsukamoto's variational principle  is   exchanging the  order of $\limsup_{\epsilon \to 0}$ and $\sup_{\mu \in M(X,T)}$ for $L^{\infty}$. Unfortunately, Lindenstrauss  and Tsukamoto  \cite[Section VIII]{lt18} constructed  a  counter-example showing  the order of  the operations of  $\limsup$ (or $\liminf$) and $\sup$ in Theorem A can not be  exchanged. 
 
  In general, it  is not easy to define a  proper quantity,  which we call \emph{measure-theoretical metric mean dimension of invariant measures},  that  does not depend on the metric of the phase space. To  solve this  difficulty,  we borrow the idea of topological pressure determining measure-theoretical entropy presented in \cite{wal75}  to  define a new  measure-theoretical  metric mean dimension for Borel  probability measures, and  using  convex  approach link it with metric mean dimension with potential.
  \begin{thm}\label{thm 1.1}
  Let $(X,T)$ be a  dynamical system  with a metric $\d$ such that ${\rm \overline{mdim}}_M(T,X,d)<\infty$. Then for all $\f$,  
  $$\over=\max_{\mu \in M(X,T)} \{{F}(\mu,d)+\int fd\mu\},$$
 where $\over$ is upper metric mean dimension with potential $f$, and $F(\mu,d)$  is  the  upper measure-theoretical  metric mean dimension of $\mu$.  
  \end{thm}

The rest of this paper is organized as follows. In section 2,  we recall the definition of  metric mean dimension with potential and derive  some basic properties for it. In section 3, we prove Theorem 1.1. In section 4,  We introduce the notion of equilibrium  state for upper metric mean dimension  with potential.

\section{ metric mean dimension with potential}

In this section, we recall the precise definition  of  metric mean dimension  with potential and derive some basic properties for it.

Let $n\in \mathbb{N}$ and $f\in C(X,\mathbb{R})$. For $x\in X$,  we  define $S_nf(x)=\sum_{j=0}^{n-1}\limits f(T^jx)$. Given $n\in \mathbb{N}$, the $n$-th Bowen metric $d_n$  on $X$  is given  by $$d_n(x,y):=\max_{0\leq j\leq n-1}\limits d(T^{j}(x),T^j(y))$$ for any $x,y \in X$. Then the \emph{Bowen open  ball}  $B_n(x,\epsilon)$ of radius $\epsilon$ and  order $n$ in the metric $d_n$  is given by  $$B_n(x,\epsilon):=\{y\in X: d_n(x,y)<\epsilon\}.$$ 
Given $\d$ and $\f$,  set 
\begin{align*}
&R_n(X,f,d,\epsilon)\\
=&\inf
\left\{\sum_{i=1}^{n}(1/\epsilon)^{\sup_{x\in U_i}\s}:
\begin{array}{l} 
X=U_1\cup\cdots \cup U_n~\text{with}\\
\diam(U_i, d_n)<\epsilon ~ \text{for all}~ i=1,...,n.
\end{array}
\right\}
\end{align*}
 and  $$R(X,f,d,\epsilon)=\lim_{n\to\infty}\limits\frac{\log R_n(X,f,d,\epsilon)}{n}.$$
 The limit exists  since $a_n:=\log R_n(X,f,d,\epsilon)$  is subadditive in $n$.  The \emph{upper metric mean dimension  of $X$ with potential $f$} \cite{t20} is given  by 
$${\rm {\rm \overline{{mdim}}}}_M(T,X,f,d)=\limsup_{\epsilon \to 0}\frac{R(X,f,d,\epsilon)}{\logf}.$$

If the system $(X,T)$ is clear, we sometimes  write ${\rm {\rm \overline{{mdim}}}}_M(T,f,d)$ instead of ${\rm {\rm \overline{{mdim}}}}_M(T,X,f,d)$. Replacing  $\limsup_{\epsilon \to 0}\limits$ with $\liminf_{\epsilon \to 0}\limits$, we  denote by  ${\rm {\rm \underline{{mdim}}}}_M(T,X,f,d)$  the lower metric mean dimension with potential $f$. If ${\rm {\rm \overline{{mdim}}}}_M(T,X,f,d)={\rm {\rm \underline{{mdim}}}}_M(T,X,f,d)$,  we call the common value  \emph{metric mean dimension of $X$ with  potential $f$} and  denote it by ${\rm {mdim}}_M(T,X,f,d)$. When $f=0$ is a zero potential,   we write ${\rm {\rm \overline{{mdim}}}}_M(T,X,d)={\rm {\rm \overline{{mdim}}}}_M(T,X,0,d)$.  This  exactly recovers the definition of  metric mean dimension of $X$ introduced by Lindenstrauss and Weiss \cite{lw00}. Obviously, the (upper/ lower) metric mean dimension with potential  depends  on the  compatible metrics on $X$.

Analogous to the definitions of  classical topological pressure in thermodynamic formalism, the upper (or lower) metric mean dimension with potential can be  equivalently  defined by spanning sets and separated  sets \cite{b71,w82}.

 A set $E\subset X$ is  \emph{an $(n,\epsilon)$-\emph{spanning set} of $X$} if  for any $x \in X$, there  exists  $y\in E$ such that $d_n(x,y)<\epsilon.$  A set $F\subset X$ is \emph{an $(n,\epsilon)$-separated set of $X$} if $d_n(x,y)\geq\epsilon$ for any $x,y \in F$ with $x\not= y$. 
 
 Define
\begin{align*}
P_n(X,f,d,\epsilon)&=\sup\{\sum_{x\in F}(1/\epsilon)^{\s}: F~\mbox{is an}~ (n,\epsilon)\mbox{-separated set of}~X\},\\
Q_n(X,f,d,\epsilon)&=\inf\{\sum_{x\in E}(1/\epsilon)^{\s}: E~\mbox{is an}~ (n,\epsilon)\mbox{-spanning set of}~X\}.
\end{align*}

The following proposition  enables us to pay attention to  outer limit $\limsup_{\epsilon \to 0}$ rather than inner limits $\limsup_{n\to \infty}$ and $\liminf_{n\to \infty}$  when we  compute the upper (or lower) metric mean dimension with potential. 
\begin{prop}\label{prop 2.1}
Let $(X,T)$ be a TDS with a metric $\d$ and $\f$. Then
\begin{align}
{\rm {\rm \overline{{mdim}}}}_M(T,f,d)&=\limsup_{\epsilon \to 0}\limsup_{n\to \infty} \frac{\log Q_n(X,f,d,\epsilon)}{n\logf}\\
&=\limsup_{\epsilon \to 0}\liminf_{n\to \infty} \frac{\log Q_n(X,f,d,\epsilon)}{n\logf}\label{inequ 2.2}\\
&=\limsup_{\epsilon \to 0}\limsup_{n\to \infty} \frac{\log P_n(X,f,d,\epsilon)}{n\logf}\label{inequ 2.3}\\
&=\limsup_{\epsilon \to 0}\liminf_{n\to \infty} \frac{\log P_n(X,f,d,\epsilon)}{n\logf},
\end{align}
it is also valid for${\rm {\rm \underline{{mdim}}}}_M(T,f,d)$ by changing $\limsup_{\epsilon \to 0}\limits$  into $\liminf_{\epsilon \to 0}\limits$. 
\end{prop}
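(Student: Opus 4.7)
The plan is to establish the standard cross-comparisons between the cover quantity $R_n$ appearing in the definition and the spanning/separated quantities $Q_n, P_n$, while carefully tracking how the potential $f$ enters through its modulus of continuity. The structural fact I would exploit throughout is that $\log R_n(X,f,d,\epsilon)$ is subadditive in $n$, so the limit $R(X,f,d,\epsilon)=\lim_n \tfrac{1}{n}\log R_n(X,f,d,\epsilon)$ already exists; this is what will let me squeeze both $\limsup_n$ and $\liminf_n$ of $\tfrac{1}{n}\log Q_n$ to the same value after sending $\epsilon\to 0$.

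First I would prove four comparisons, valid for all sufficiently small $\epsilon>0$ and every $n\geq 1$. Two are elementary: $Q_n(X,f,d,\epsilon)\leq R_n(X,f,d,\epsilon)$, by picking a representative from each set of an admissible open cover (the representatives then form an $(n,\epsilon)$-spanning set of smaller weight); and $Q_n(X,f,d,\epsilon)\leq P_n(X,f,d,\epsilon)$, since any maximal $(n,\epsilon)$-separated set is $(n,\epsilon)$-spanning. The other two compare scales $\epsilon$ and $2\epsilon$: from an $(n,\epsilon)$-spanning set $E$, the cover $\{B_n(x,\epsilon): x\in E\}$ has $d_n$-diameter at most $2\epsilon$, and uniform continuity gives $\sup_{B_n(x,\epsilon)} S_n f \leq S_nf(x)+n\,\mathrm{var}(f,\epsilon)$, yielding
$$R_n(X,f,d,2\epsilon) \;\leq\; (1/\epsilon)^{n\,\mathrm{var}(f,\epsilon)} \cdot 2^{n\|f\|} \cdot Q_n(X,f,d,\epsilon);$$
an analogous bound for $P_n(X,f,d,2\epsilon)$ in terms of $Q_n(X,f,d,\epsilon)$ follows from the standard injection of an $(n,2\epsilon)$-separated set into an $(n,\epsilon)$-spanning set. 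Here $\mathrm{var}(f,\epsilon):=\sup\{|f(a)-f(b)|: d(a,b)<\epsilon\}$ tends to $0$ as $\epsilon\to 0$ by uniform continuity of $f$ on the compact space $X$.

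Second, I would take $n\to\infty$ and then $\epsilon\to 0$. From the first elementary comparison and the existence of $R(X,f,d,\epsilon)$,
$$\limsup_n \frac{\log Q_n(X,f,d,\epsilon)}{n\log(1/\epsilon)} \;\leq\; \frac{R(X,f,d,\epsilon)}{\log(1/\epsilon)},$$
and applying $\limsup_{\epsilon\to 0}$ gives the upper bound in (2.1). From the scale comparison for $R_n$, after passing $n\to\infty$,
$$\frac{R(X,f,d,2\epsilon)}{\log(1/(2\epsilon))} \;\leq\; \frac{\mathrm{var}(f,\epsilon)\log(1/\epsilon)+\|f\|\log 2}{\log(1/(2\epsilon))} \;+\; \liminf_n \frac{\log Q_n(X,f,d,\epsilon)}{n\log(1/(2\epsilon))}.$$
Since $\mathrm{var}(f,\epsilon)\to 0$ and $\log(1/(2\epsilon))/\log(1/\epsilon)\to 1$, applying $\limsup_{\epsilon\to 0}$ (with the substitution $\epsilon'=2\epsilon$ on the left) yields $\overline{mdim}_M(T,f,d) \leq \limsup_{\epsilon\to 0}\liminf_n \tfrac{\log Q_n(X,f,d,\epsilon)}{n\log(1/\epsilon)}$. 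Chaining with $\liminf_n \leq \limsup_n$ forces (2.1) and (2.2) to equal $\overline{mdim}_M(T,f,d)$; (2.3) and (2.4) follow identically from the $P_n$-comparisons; and the $\underline{mdim}_M$-statement is obtained by replacing each outer $\limsup_{\epsilon\to 0}$ by $\liminf_{\epsilon\to 0}$ throughout, using that the error term has a limit (namely $0$), so the outer $\liminf$ distributes over the sum.

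The main obstacle is the bookkeeping: keeping track of the scale shifts between $\epsilon$ and $2\epsilon$, the uniform-continuity error $n\,\mathrm{var}(f,\epsilon)$, and the weight-conversion factor $2^{n\|f\|}$ that arises when rescaling $(1/\epsilon)^{S_nf}$ versus $(1/(2\epsilon))^{S_nf}$, and verifying that each contribution becomes $o(1)$ when divided by $\log(1/\epsilon)$ and sent through the outer limit in $\epsilon$. Once these estimates are organized, the argument essentially mirrors the classical topological-pressure proof, with the extra twist that the scale shift from $\epsilon$ to $2\epsilon$ only produces vanishing errors because $\mathrm{var}(f,\epsilon)\to 0$ and $\|f\|<\infty$.
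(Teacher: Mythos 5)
Your proposal is correct and follows essentially the same route as the paper: compare $R_n$, $Q_n$, $P_n$ at nearby scales ($\epsilon$ versus $2\epsilon$ in your version, $\epsilon$ versus $\epsilon/2$ in the paper's) with error terms $n\,\mathrm{var}(f,\epsilon)\log(1/\epsilon)+n\|f\|\log 2$ controlled by uniform continuity and the base change, then use the existence of $\lim_n\frac{1}{n}\log R_n(X,f,d,\epsilon)$ to squeeze $\limsup_n$ and $\liminf_n$ of $\frac{1}{n}\log Q_n$ (and likewise $\frac{1}{n}\log P_n$ via the maximal-separated-set and injection arguments) between quantities that agree after $\epsilon\to 0$. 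The only cosmetic difference is that your comparison $Q_n(X,f,d,\epsilon)\le R_n(X,f,d,\epsilon)$ carries no error term, which slightly streamlines one direction.
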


\begin{proof}
We first show $(2.1)=(2.2)$. Let $\epsilon >0$ and we set $\gamma(\epsilon):=\sup\{|f(x)-f(y)|:d(x,y)<\epsilon\}$, then $\gamma(\epsilon) \to 0$ as $\epsilon \to 0$. Let $0<\epsilon <1$.  Let $E$ be an $(n,\frac{\epsilon}{2})$-spanning set of $X$, then $X=\cup_{x\in E}B_n(x,\frac{\epsilon}{2})$. This implies that
\begin{align*}
R_n(X,f,d,\epsilon)&\leq\sum_{x\in E}(1/\epsilon)^{\sup_{y\in B_n(x,\frac{\epsilon}{2})}S_nf(y)}\\
&\leq \sum_{x\in E}(1/\epsilon)^{\s+n\gamma(\epsilon)}\\
&\leq  (1/\epsilon)^{n\gamma(\epsilon)}2^{n||f||}\sum_{x\in E}(2/\epsilon)^{\s}.
\end{align*}
We obtain that \begin{align}
\lim_{n\to\infty}\limits \frac{\log R_n(X,f,d,\epsilon)}{n}\leq \liminf_{n\to\infty}\limits \frac{\log Q_n(X,f,d,\frac{\epsilon}{2})}{n}+\gamma(\epsilon)\log\frac{1}{\epsilon}+||f||\log2.
\end{align}  

Let $X=U_1\cup \cdots \cup U_n$  be an open cover of $X$ with $ \diam(U_i, d_n)<\frac{\epsilon}{2}, i=1,...,n.$ Choose $x_i\in U_i$ for every $1\leq i\leq n$.  We have $\{x_1,..,x_n\}$  is an $(n,\epsilon)$-spanning set of $X$. Similarly, we can deduce that 
\begin{align}
\limsup_{n\to\infty}\limits \frac{\log Q_n(X,f,d,\epsilon)}{n}\leq \lim_{n\to\infty}\limits \frac{\log R_n(X,f,d,\frac{\epsilon}{2})}{n}+\gamma(\epsilon)\log\frac{1}{\epsilon}+||f||\log2.
\end{align}  
Combing the facts (2.5) and (2.6), this shows $(2.1)=(2.2)$.

It remains to show $(2.1)=(2.3)$ and $(2.2)=(2.4)$. Here, we only need to show $(2.1)=(2.3)$  since $(2.2)=(2.4)$ can be proved  in a similar manner. Note that an  $(n,\epsilon)$-separated set  with the maximal cardinality of $X$ is also an  $(n,\epsilon)$-spanning set of $X$, this yields that $Q_n(X,f,d,\epsilon) \leq P_n(X,f,d,\epsilon)$.  On the  other hand,  let $E$ be an  $(n,\frac{\epsilon}{2})$-spanning set of $X$ and $F$ be an $(n,\epsilon)$-separated set of $X$.  We define a map $\Phi: F \longrightarrow E$ by assigning each $x \in F$ to $\Phi(x)\in E$ with $d_n(x,\Phi(x))< \frac{\epsilon}{2}$. It is easy  to show  the map is injective.
\begin{align*}
 & \sum_{x\in E}(2/\epsilon)^{\s}\\
\geq& \sum_{x\in F}(1/\epsilon)^{S_nf(\Phi(x))}2^{-n|||f||}\\ 
\geq&  \sum_{x\in F}(1/\epsilon)^{S_nf(x)-n\gamma(\epsilon)}2^{-n|||f||}  .
\end{align*}
This shows
 \begin{align}
\limsup_{n\to\infty}\limits \frac{\log Q_n(X,f,d,\frac{\epsilon}{2})}{n}\geq \limsup_{n\to\infty}\limits \frac{\log P_n(X,f,d,\epsilon)}{n}-\gamma(\epsilon)\log\frac{1}{\epsilon}-||f||\log2.
\end{align}  
This completes the proof.
\end{proof}

The next propositions presents  the basic properties  of  the upper (lower) metric mean dimension with potential.
\begin{prop}\label{prop 2.2}
Let $(X,T)$ be a TDS with a metric $\d$ and $f,g\in C(X,\mathbb{R})$. Then
\begin{enumerate}

\item If  $f\leq g$, then  ${\rm {\rm \overline{{mdim}}}}_M(T,f,d)\leq {\rm \overline{{mdim}}}_M(T,g,d)$.\\
\item  $ {\rm \overline{{mdim}}}_M(T,f+c,d)={\rm {\rm \overline{{mdim}}}}_M(T,f,d)+c$ for  any $c\in \R$.
\item  $ {\rm \overline{{mdim}}}_M(T,X,d)+\inf f \leq {\rm {\rm \overline{{mdim}}}}_M(T,f,d) \leq {\rm \overline{{mdim}}}_M(T,X,d)+\sup f$.
\item  ${\rm \overline{{mdim}}}_M(T,\cdot,d): C(X,\R)\longrightarrow \R\cup\{\infty\}$ is either finite value or constantly $\infty$.
\item If ${\rm {\rm \overline{{mdim}}}}_M(T,f,d)\in \R$ for all $\f$, then 
$$|{\rm {\rm \overline{{mdim}}}}_M(T,f,d)-{\rm \overline{{mdim}}}_M(T,g,d)|\leq ||f-g||$$
and ${\rm \overline{{mdim}}}_M(T,\cdot,d)$ is convex on $C(X,\R)$. 

\item ${\rm \overline{{mdim}}}_M(T,f+g,d)\leq {\rm {\rm \overline{{mdim}}}}_M(T,f,d)+{\rm \overline{{mdim}}}_M(T,g,d)$.
\item  If  $c\geq 1$, then ${\rm \overline{{mdim}}}_M(T,cf,d)\leq c{\rm {\rm \overline{{mdim}}}}_M(T,f,d)$; if  $c\leq 1$, then ${\rm \overline{{mdim}}}_M(T,cf,d)\geq c{\rm {\rm \overline{{mdim}}}}_M(T,f,d)$.
\item $|{\rm {\rm \overline{{mdim}}}}_M(T,f,d)|\leq {\rm \overline{{mdim}}}_M(T,|f|,d)$.
\end{enumerate}
\end{prop}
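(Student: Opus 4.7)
The plan is to use the separated-set characterization from Proposition \ref{prop 2.1}, so that everything reduces to comparing sums of the form $\sum_{x\in F}(1/\epsilon)^{S_nf(x)}$ over $(n,\epsilon)$-separated sets $F$. Throughout one restricts to $\epsilon<1$ so that $\log(1/\epsilon)>0$ and exponentiation preserves the order of real numbers.

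For (1), $f\leq g$ gives $S_nf\leq S_ng$, hence $(1/\epsilon)^{S_nf(x)}\leq(1/\epsilon)^{S_ng(x)}$ and $P_n(X,f,d,\epsilon)\leq P_n(X,g,d,\epsilon)$; take logs, divide by $n\log(1/\epsilon)$, and apply the double $\limsup$. For (2), $S_n(f+c)=S_nf+nc$ pulls out a factor $(1/\epsilon)^{nc}$, contributing exactly $c$ after dividing by $\log(1/\epsilon)$. Item (3) follows by sandwiching $f$ between the constants $\inf f$ and $\sup f$ and combining (1) with (2). Item (4) is then immediate: by (3) applied to $f\equiv 0$, $\overline{mdim}_M(T,f,d)$ is finite if and only if $\overline{mdim}_M(T,X,d)$ is.

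For (5), the Lipschitz estimate comes from $f\leq g+\|f-g\|$ together with (1), (2) and a symmetric application. Convexity is obtained via H\"older's inequality: for $\lambda\in(0,1)$ and any $(n,\epsilon)$-separated set $F$,
\begin{align*}
\sum_{x\in F}(1/\epsilon)^{S_n(\lambda f+(1-\lambda)g)(x)}
&=\sum_{x\in F}\bigl[(1/\epsilon)^{S_nf(x)}\bigr]^{\lambda}\bigl[(1/\epsilon)^{S_ng(x)}\bigr]^{1-\lambda}\\
&\leq\Bigl(\sum_{x\in F}(1/\epsilon)^{S_nf(x)}\Bigr)^{\lambda}\Bigl(\sum_{x\in F}(1/\epsilon)^{S_ng(x)}\Bigr)^{1-\lambda},
\end{align*}
so $P_n(X,\lambda f+(1-\lambda)g,d,\epsilon)\leq P_n(X,f,d,\epsilon)^{\lambda}P_n(X,g,d,\epsilon)^{1-\lambda}$, and passing through logs and $\limsup$'s yields convexity.

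For (6), the identity $(1/\epsilon)^{S_n(f+g)(x)}=(1/\epsilon)^{S_nf(x)}(1/\epsilon)^{S_ng(x)}$ together with $\sum_{x}a_xb_x\leq(\sum_x a_x)(\sum_x b_x)$ for nonnegative reals gives $P_n(X,f+g,d,\epsilon)\leq P_n(X,f,d,\epsilon)P_n(X,g,d,\epsilon)$; take logs. For (7), apply the $\ell^c$--$\ell^1$ comparison $\sum a_x^c\leq(\sum a_x)^c$ for $c\geq 1$ and the reverse $\sum a_x^c\geq(\sum a_x)^c$ for $0\leq c\leq 1$ (with a single-term domination argument extending the latter to $c<0$), applied to $a_x=(1/\epsilon)^{S_nf(x)}$. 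Finally, (8) is a corollary of (1) and (6): $f\leq|f|$ yields $\overline{mdim}_M(T,f,d)\leq\overline{mdim}_M(T,|f|,d)$, while $0=|f|+(-|f|)$ with (6) and the nonnegativity of $\overline{mdim}_M(T,X,d)$ give $\overline{mdim}_M(T,-|f|,d)\geq-\overline{mdim}_M(T,|f|,d)$, and then $-|f|\leq f$ with (1) closes the loop. No step presents a genuine obstacle; the only things requiring mild care are consistently assuming $\epsilon<1$ so that all exponential/logarithm manipulations preserve the directions of inequalities, and treating the finite/infinite dichotomy of (4) uniformly through (3).
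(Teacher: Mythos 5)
Your proof is correct and follows essentially the same route as the paper's: everything is reduced, via Proposition \ref{prop 2.1}, to comparing sums $\sum_{x\in F}(1/\epsilon)^{S_nf(x)}$ over $(n,\epsilon)$-separated sets, with H\"older's inequality giving convexity in (5) and the $\ell^c$--$\ell^1$ comparison giving (7). The only (immaterial) deviation is that you obtain the lower bound in (8) from subadditivity (6) plus nonnegativity of $\overline{{mdim}}_M(T,X,d)$, whereas the paper deduces it from (7); both are one-line arguments.
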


\begin{proof}
We give the proof of Proposition 2.2 one by one.

(1-3) can be proved by using the definition of $\over$.

(4)Note that ${\rm \overline{{mdim}}}_M(T,X,d)\in[0,\infty)\cup\{\infty\}$, and ${\rm \overline{{mdim}}}_M(T,X,d)<\infty$ if and only if $\over\in \R$ for all  $\f$ by (3), this shows (4).

(5)  Let $0<\epsilon <1$ and $F$ be an  $(n,\epsilon)$-separated set of $X$. Then we have
$$\sum_{x\in F}(1/\epsilon)^{\s}\leq \sum_{x\in F}(1/\epsilon)^{S_ng(x)+n||f-g||},$$
which implies that  ${\rm {\rm \overline{{mdim}}}}_M(T,f,d)\leq {\rm \overline{{mdim}}}_M(T,g,d)+||f-g||.$
Exchanging the role of $f$ and $g$, we obtain that  $${\rm \overline{{mdim}}}_M(T,g,d)\leq {\rm {\rm \overline{{mdim}}}}_M(T,f,d)+||f-g||.$$
Let $p\in[0,1]$ and $f,g\in C(X,\R)$, and let $0<\epsilon<1$. Let $F$ be an  $(n,\epsilon)$-separated set of $X$. Using H$\ddot{o}$lder's inequality, we have
\begin{align*}
\sum_{x\in F}(1/\epsilon)^{pS_nf(x)+(1-p)S_ng(x)}\leq (\sum_{x\in F}(1/\epsilon)^{S_nf(x)})^p(\sum_{x\in F}(1/\epsilon)^{S_ng(x)})^{(1-p)},
\end{align*}
 which yields that  ${\rm \overline{{mdim}}}_M(T,pf+(1-p)g,d)\leq p{\rm {\rm \overline{{mdim}}}}_M(T,f,d)+(1-p){\rm \overline{{mdim}}}_M(T,g,d)$.
 
 (6) Let $0<\epsilon <1$ and $F$ be an  $(n,\epsilon)$-separated set of $X$. Then we have
 $$\sum_{x\in F}(1/\epsilon)^{S_n(f+g)(x)}\leq \sum_{x\in F}(1/\epsilon)^{S_nf(x)}\cdot \sum_{x\in F}(1/\epsilon)^{S_ng(x)},$$
 which  implies the desired result.
 
 (7) If $a_1,...,a_k$ are $k$ positive real numbers with $\sum_{i=1}^ka_i=1$, then $\sum_{i=1}^ka_i^c\leq1$ if $c\geq1$;$\sum_{i=1}^ka_i^c\geq1$ if $c\leq1$;
 
 Let $0<\epsilon <1$ and $F$ be an  $(n,\epsilon)$-separated set of $X$. Then we have
  $$\sum_{x\in F}(1/\epsilon)^{cS_nf(x)}\leq (\sum_{x\in F}(1/\epsilon)^{S_nf(x)})^c$$ if $c\geq 1$;
  
$$\sum_{x\in F}(1/\epsilon)^{cS_nf(x)}\geq (\sum_{x\in F}(1/\epsilon)^{S_nf(x)})^c$$ if $c\leq1$;

We get the desired result  after taking the corresponding limits.

(8)  Since $-|f|\leq f\leq |f|$, then ${\rm \overline{{mdim}}}_M(T,-|f|,d)\leq {\rm {\rm \overline{{mdim}}}}_M(T,f,d)\leq {\rm \overline{{mdim}}}_M(T,|f|,d)$ by (1). By (7), we have  ${\rm \overline{{mdim}}}_M(T,-|f|,d)\geq -{\rm \overline{{mdim}}}_M\\(T,|f|,d)$. This shows (8).
  
\end{proof}

The following proposition establishes the product formula for  metric mean dimension formula with potential.
\begin{prop}
Let $(X_i,T_i)$ be a TDS with a metric $d_i\in \mathscr{D}(X_i)$ and $f_i\in C(X_i,\mathbb{R})$,  $i=1,2$, and $(X_1\times X_2, T_1\times T_2)$ is a product system defined by $T_1\times T_2:(x_1,x_2)\longmapsto(T_1x_1,T_2x_2)$. Then 
\begin{align}
{\rm \overline{{mdim}}}_M(T_1\times T_2,X_1\times X_2,f,d)\leq {\rm \overline{{mdim}}}_M(T_1,X_1,f_1,d_1)+ {\rm \overline{{mdim}}}_M(T_2,X_2,f_2,d_2)\\
{\rm \underline{{mdim}}}_M(T_1\times T_2,X_1\times X_2,f,d)\geq {\rm \underline{{mdim}}}_M(T_1,X_1,f_1,d_1)+ {\rm \underline{{mdim}}}_M(T_2,X_2,f_2,d_2).
\end{align}
where 
$d((x_1,x_2),(y_1,y_2))=\max\{d_1(x_1,y_1),d_2(x_2,y_2)\}$ for any $(x_1,x_2),\\(y_1,y_2)\in X_1\times X_2$,  and $f:X_1\times X_2 \longrightarrow \R$ is defined by $f(x_1,x_2)=f_1(x_1)+f_2(x_2).$
\end{prop}

\begin{proof}
Let $0<\epsilon <1$. Let  $E$ be an  $(n,\epsilon)$-spanning  set of $X_1$ and  $F$ be an  $(n,\epsilon)$-spanning  set of $X_2$. Then $E\times F$ is an $(n,\epsilon)$-spanning  set of $X_1 \times X_2$. This gives us that

\begin{align*}
Q_n(X_1\times X_2,f,d,\epsilon)&\leq\sum_{(x,y)\in E\times F}(1/\epsilon)^{S_nf(x,y)}\\
&=\sum_{x\in E}(1/\epsilon)^{S_nf_1(x)}\cdot \sum_{y\in F}(1/\epsilon)^{S_nf_2(y)},
\end{align*}
which implies that $$Q_n(X_1\times X_2,f,d,\epsilon)\leq Q_n(X_1,f_1,d_1,\epsilon)Q_n( X_2, f_2,d_2,\epsilon).$$

We  get  inequality (2.8) by (\ref{inequ 2.3}). Inequality (2.9) can be  obtained by using the definition of separated set of  lower metric mean dimension with potential  and Proposition \ref{prop 2.1}.
\end{proof}

A power rule of  metric mean dimension  with potential  is stated as follows.

\begin{prop}
Let $(X,T)$ be a TDS with a metric $d$ and $f\in C(X,\mathbb{R})$. Then  for any  positive number $k\geq2$, we have
\begin{align}
{\rm \overline{{mdim}}}_M(T^k,S_kf,d)\leq k{\rm {\rm \overline{{mdim}}}}_M(T,f,d),
\end{align}
where $S_kf(x)=\sum_{j=0}^{k-1}f(T^jx)$.

Additionally,  if  $T$ is  a  Lipschitz mapping on $X$,  then 
$${\rm \overline{{mdim}}}_M(T^k,S_kf,d)=k{\rm {\rm \overline{{mdim}}}}_M(T,f,d).$$
\end{prop}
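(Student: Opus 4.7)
The plan is to work throughout with the separated-set formulation (2.3) of Proposition~\ref{prop 2.1} and exploit two elementary identities comparing the $n$-th data for $(T^k, S_kf)$ to the $kn$-th data for $(T,f)$: namely,
\begin{align*}
d_n^{T^k}(x,y) = \max_{0\leq i\leq n-1} d(T^{ki}x, T^{ki}y), \qquad S_n^{T^k}(S_kf)(x) = \sum_{i=0}^{n-1}\sum_{r=0}^{k-1} f(T^{ki+r}x) = S_{kn}^T f(x).
\end{align*}
Below, $P_n^T(X, f, d, \epsilon)$ and $P_n^{T^k}(X, S_kf, d, \epsilon)$ denote the separated-set quantity $P_n$ computed with Bowen metrics $d_n^T$ and $d_n^{T^k}$, respectively.

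For the unconditional inequality (2.11), the first identity gives $d_n^{T^k}(x,y) \leq d_{kn}^T(x,y)$, so every $(n,\epsilon)$-separated set for $T^k$ is automatically $(kn,\epsilon)$-separated for $T$. Combined with the second identity (which makes the exponent of $1/\epsilon$ agree), this yields $P_n^{T^k}(X, S_kf, d, \epsilon) \leq P_{kn}^T(X, f, d, \epsilon)$. Taking logarithms, dividing by $n = (kn)/k$, bounding the subsequence $\limsup_n$ along $m = kn$ by the full $\limsup_m$, dividing by $\log(1/\epsilon)$, and applying $\limsup_{\epsilon \to 0}$ gives $\overline{mdim}_M(T^k, S_kf, d) \leq k\, \overline{mdim}_M(T, f, d)$.

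For the Lipschitz case, let $L \geq 1$ be a Lipschitz constant for $T$. Splitting $j = ki + r$ with $0 \leq r \leq k-1$ and applying $d(T^r u, T^r v) \leq L^r d(u,v)$ with $u = T^{ki}x$, $v = T^{ki}y$ gives $d_{kn}^T(x, y) \leq L^{k-1}\, d_n^{T^k}(x, y)$. Hence every $(kn,\epsilon)$-separated set $F$ for $T$ is $(n,\delta)$-separated for $T^k$, where $\delta := \epsilon/L^{k-1}$. Writing $(1/\epsilon)^a = L^{-(k-1)a}(1/\delta)^a$ and using the uniform bound $L^{-(k-1)a} \leq L^{(k-1)|a|} \leq L^{(k-1) kn \|f\|}$ (valid for every real $a$ with $|a| \leq kn\|f\|$ when $L \geq 1$, in particular for $a = S_{kn}^T f(x)$), summation over $F$ together with the second identity gives
\begin{align*}
P_{kn}^T(X, f, d, \epsilon) \leq L^{(k-1) kn \|f\|}\, P_n^{T^k}(X, S_kf, d, \delta).
\end{align*}
Dividing by $kn\log(1/\epsilon)$, taking $\limsup_n$ and then $\limsup_{\epsilon \to 0}$ (noting that $(k-1)\|f\|\log L/\log(1/\epsilon) \to 0$ and $\log(1/\delta)/\log(1/\epsilon) \to 1$ as $\epsilon \to 0$), I deduce $k\, \overline{mdim}_M(T, f, d) \leq \overline{mdim}_M(T^k, S_kf, d)$, which together with the previous step yields the equality. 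The assertions for $\underline{mdim}_M$ follow verbatim by replacing every outer $\limsup_{\epsilon \to 0}$ with $\liminf_{\epsilon \to 0}$.

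The main obstacle is the careful sign handling of the exponent in $(1/\epsilon)^a$ when rescaling $\epsilon$ to $\delta$: the factor $L^{-(k-1)a}$ is worst when $a = S_{kn}^T f(x)$ is large and negative, and the observation that rescues the argument is that it is nevertheless dominated by the uniform constant $L^{(k-1) kn \|f\|}$, whose per-time logarithmic cost $(k-1)\|f\|\log L$ becomes negligible after division by $\log(1/\epsilon) \to \infty$. A secondary routine point is the equality $\limsup_m (\log P_m^T)/m = \limsup_n (\log P_{kn}^T)/(kn)$, which follows from the bound $P_m^T \leq (1/\epsilon)^{(k-1)\|f\|} P_{kn}^T$ for $kn \leq m < k(n+1)$ (an $(m,\epsilon)$-separated set is $(kn,\epsilon)$-separated, and the extra Birkhoff terms contribute at most $(k-1)\|f\|$).
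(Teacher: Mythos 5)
Your proof is correct and follows essentially the same route as the paper's, which runs the identical comparison between the time-$n$ data for $(T^k,S_kf)$ and the time-$kn$ data for $(T,f)$, only phrased with spanning sets $Q_n$ instead of separated sets $P_n$, and with the same key Lipschitz estimate $d_{kn}^T\leq C\,d_n^{T^k}$ for a constant $C$ built from the Lipschitz constant. The one substantive difference is that the paper sidesteps your ``secondary routine point'' by invoking the $\liminf_n$ characterization (2.2) (a liminf along the subsequence $kn$ dominates the liminf along all $n$), whereas you prove the subsequence-limsup equality directly; your explicit treatment of the base change $(1/\epsilon)^a=L^{-(k-1)a}(1/\delta)^a$ for possibly negative exponents $a=S_{kn}f(x)$ is a point the paper's ``similarly'' glosses over, so on that step you are in fact more careful than the original.
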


\begin{proof}
Let $0<\epsilon <1$. Let  $E$ be an  $(nk,\epsilon)$-spanning  set of $X$  for $T$. Then  $E$ is an  $(n,\epsilon)$-spanning  set of $X$  for $T^k$.  It  follows that 
\begin{align*}
Q_n(X,S_kf,d,\epsilon)&\leq\sum_{x\in E}(1/\epsilon)^{S_n(S_kf(x))}\\
&=\sum_{x\in E}(1/\epsilon)^{S_{nk}f(x)},
\end{align*}
which implies  that $Q_n(X,S_kf,d,\epsilon)\leq Q_{nk}(X,f,d,\epsilon).$
Hence,  we have 
\begin{align*}
\limsup_{n\to \infty}\frac{\log Q_n(X,S_kf,d,\epsilon)}{n}&\leq  k\cdot\limsup_{n\to \infty}\frac{\log Q_{nk}(X,f,d,\epsilon)}{nk}\\
&\leq k\cdot\limsup_{n\to \infty}\frac{\log Q_{n}(X,f,d,\epsilon)}{n},
\end{align*}
which implies inequality (2.10).

Now assume that $L>0$ is a constant satisfying $d(Tx,Ty)\leq L\cdot d(x,y)$ for any $x,y \in X$. Fix $\alpha >0$.  Let $0<\epsilon <1$  and $C=(max\{L,1\}+\alpha)^k>1$. Then $d_k(x,y)\leq C\epsilon$ if $d(x,y)\leq \epsilon$ for any $x,y\in X$. This shows if  $E$ is  an  $(n,\epsilon)$-spanning  set of $X$  for $T^k$, then  $E$ is an  $(nk,C\epsilon)$-spanning  set of $X$  for $T$.
Similarly, we have
\begin{align*}
\limsup_{n\to \infty}\frac{\log Q_n(X,S_kf,d,\epsilon)}{n}&\geq k\cdot\liminf_{n\to \infty}\frac{\log Q_{nk}(X,f,d,C\epsilon)}{nk}\\
&\geq k\cdot\liminf_{n\to \infty}\frac{\log Q_{n}(X,f,d,C\epsilon)}{n}.
\end{align*}
By  (\ref{inequ 2.2}) and using the fact $\lim_{\epsilon \to 0}\limits\frac{\log \frac{1}{C\epsilon}}{\logf}=1$, we  have 
${\rm \overline{{mdim}}}_M(T^k,S_kf,d)\geq k{\rm {\rm \overline{{mdim}}}}_M(T,f,d).$ This completes the proof.

\end{proof}

\begin{rem}
By virtue  of the continuity of $T$, for any $\epsilon >0$  there exists $\delta(\epsilon)>0$ such that $d_k(x,y)<\epsilon$  for any  $x,y \in X$ with $d(x,y)<\delta$.  One can similarly deduce that
\begin{align*}
&\limsup_{\epsilon \to 0}\frac{\log\frac{1}{\delta (\epsilon)}}{\logf}\cdot\limsup_{n\to \infty}\frac{\log Q_n(X,S_kf,d,\delta)}{n\log \frac{1}{\delta(\epsilon)}}\\
\geq& k\cdot\limsup_{\epsilon \to 0}\liminf_{n\to \infty}\frac{\log Q_{n}(X,f,d,\epsilon)}{n\logf}.
\end{align*}
However, it is hard to determine  whether $\limsup_{\epsilon \to 0}\frac{\log\frac{1}{\delta (\epsilon)}}{\logf}$ is equal to $1$without the assumption of Lipschitz condition on the continuous potential.  
\end{rem}

\section{Proof of Theorem 1.1}
In this section, we borrow the idea of topological pressure determining measure-theoretical  entropy to define the upper measure-theoretical  metric mean dimension for  probability measures,  and further  prove Theorem 1.1.

The classical variational principle states that  for any $\f$, one has 
$$P(T,f)=\sup_{\mu \in M(X,T)}\{h_{\mu}(T)+\int f d\mu\},$$
where $P(T,f)$  denotes the  topological pressure of $f$ w.r.t. $T$, and $h_{\mu}(T)$  is  the  measure-theoretical  entropy of $\mu$ w.r.t. $T$. Suppose that  topological entropy  $h_{top}(T):=P(T,0)<\infty$. Using the variational principle for topological pressure,  Walters \cite[Theorem  9.12]{w82} proved that for each fixed $\mu_0\in M(X,T)$ the entropy map  of $T$ is upper semi-continuous at $\mu_0$ if and only if
\begin{align*}
h_{\mu_0}(T)&=\inf\{P(T,f)-\int fd\mu_0:f\in C(X,\R)\}\\
&=\inf_{f\in \hat{\mathcal{A}}}\int fd\mu_0,
\end{align*}
where $\hat{\mathcal{A}}=\{\f:P(T,-f)=0\}$.
In other words,  for such systems measure-theoretical  entropy is completely determined by  topological pressure.  We  follow this idea to define the so-called upper measure-theoretical metric mean dimension $F(\mu,d)$  for each probability measure on $X$.

\begin{df}\label{def 3.4}
Let  $(X,T)$ be a dynamical system with a metric $\d$ such that ${\rm \overline{{mdim}}}_M(T,X,d)<\infty$, and let  $\mu \in M(X)$. The  upper measure-theoretical  metric mean dimension of $\mu$ is defined as 
 $$F(\mu,d)=\inf_{f\in \mathcal{A}}\int fd\mu,$$ where $\mathcal{A}=\{\f:{\rm \overline{{mdim}}}_M(T,-f,d)=0\}$.
\end{df}

\begin{rem}\label{rem 3.5}
(1) We emphasize "metric" in definition 3.1 because $F(\mu,d)$ depends on the  choice of the metric $\d$. 

(2) If ${\rm \overline{{mdim}}}_M(T,X,d)=\infty$, we have $\over=\infty$ for all $C(X,\R)$ by Proposition 2.2,(3). In  this case,  $\mathcal{A} $ is  an  empty  set. Thus, we set $F(\mu,d)=\inf \emptyset=\infty$ for all $\mu \in M(X)$.



\end{rem}

\begin{prop} \label{prop 3.6}
Let  $(X,T)$ be a TDS with a metric $\d$ such that ${\rm \overline{{mdim}}}_M(T,X,d)<\infty$. Then $F(\mu,d)$ is  an upper  semi-continuous, concave  function on  $M(X)$.
\end{prop}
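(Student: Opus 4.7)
The plan is to exploit the representation of $F$ as an infimum of a family of weak-$*$ continuous affine functionals on $M(X)$, after which all three properties follow from elementary general principles. For each fixed $f \in \mathcal{A}$, the evaluation map $L_f \colon \mu \mapsto \int f \, d\mu$ is affine (by linearity of integration in $\mu$) and continuous with respect to the weak-$*$ topology on $M(X)$ (this is exactly the defining property of weak-$*$ convergence, since $f \in C(X,\mathbb{R})$).

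The first step is upper semi-continuity: since $F$ is the pointwise infimum of the family $\{L_f : f \in \mathcal{A}\}$ of real-valued continuous functions on $M(X)$, and a pointwise infimum of continuous functions is always upper semi-continuous, $F$ is upper semi-continuous. The second step is concavity: each $L_f$ is affine and therefore concave, and a pointwise infimum of concave functions is concave, so $F$ is concave. The third step is boundedness from above: by Remark \ref{rem 3.5}(1), the hypothesis $\overline{mdim}_M(T,X,d)<\infty$ together with Proposition \ref{prop 2.2}(2) guarantees that $\mathcal{A}$ is nonempty; in particular, the constant function $c := \overline{mdim}_M(T,X,d)$ satisfies $\overline{mdim}_M(T,-c,d) = \overline{mdim}_M(T,X,d) - c = 0$, so $c \in \mathcal{A}$. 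Choosing this specific $f_0 = c$ in the defining infimum yields $F(\mu) \leq \int c\, d\mu = c$ for every $\mu \in M(X)$, exhibiting a finite upper bound.

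There is no serious technical obstacle here. The only point demanding attention is ensuring that the infimum is not taken over an empty set (which would force $F \equiv +\infty$ and collide with upper semi-continuity in any meaningful sense); this is exactly where the hypothesis $\overline{mdim}_M(T,X,d)<\infty$ enters, via the non-emptiness of $\mathcal{A}$ recorded in Remark \ref{rem 3.5}. Once this is in hand, the proposition reduces to the standard observation that the pointwise infimum of a nonempty family of weak-$*$ continuous affine functionals on $M(X)$ is upper semi-continuous, concave, and — as soon as one bounded member of the family is exhibited — bounded above.
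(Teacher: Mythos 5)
Your proof is correct and follows essentially the same route as the paper's: upper semi-continuity from the fact that an infimum of (weak-$*$) continuous evaluation functionals is upper semi-continuous, concavity from the infimum of affine maps, and boundedness above by exhibiting the constant function $\overline{{mdim}}_M(T,X,d)$ as an element of $\mathcal{A}$ via Proposition \ref{prop 2.2}(2). Your treatment is slightly more explicit than the paper's (which dismisses concavity as "clear"), but there is no substantive difference.
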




\begin{prop}\label{prop 3.4}
Let $(X,T)$ be a TDS with a metric $\d$ and $f$. Then for all $g\in C(X,\mathbb{R})$,
$${\rm \overline{mdim}}_M(T,f,d)
={\rm \overline{mdim}}_M(f+g\circ T-g)$$
\end{prop}

\begin{proof}
It can be proved using the fact if  $F$ is an  $(n,\epsilon)$-separated set of $X$, then 
$$\sum_{x\in F}(1/\epsilon)^{S_nf(x)-2||f-g||}\leq \sum_{x\in F}(1/\epsilon)^{S_n(f+g\circ T-g)}\leq \sum_{x\in F}(1/\epsilon)^{S_nf(x)+2||f-g||}.$$
\end{proof}

We invoke the  well-known  separation theorem of convex sets \cite[p. 417]{ds88}  for  the proof,  which states that if  $K_1, K_2$ are disjoint closed  subsets of a local convex linear topological space $V$,  and  $K_1$ is compact, then  there exists a continuous real-valued  linear  functional $\varphi$ on $V$ such that $\varphi(x)<\varphi(y)$ for any $x\in K_1$ and $y\in K_2$.

\begin{proof}[Proof of Theorem \ref{thm 1.1}]
We divide the proof into two steps.

Step 1. we  show 
$$ \over=\max_{\mu \in M(X)}\{F(\mu,d)+\int fd\mu\}.$$

 As $\over-f\in\mathcal{A}$,  we have $$F(\mu,d)\leq \int\over-fd\mu,$$
 which  shows for all  $\mu \in M(X)$,  ${F(\mu,d)+\int fd\mu} \leq \over$.
 
So the remaining is to show $$\over\leq\sup_{\mu \in M(X)} \{{F}(\mu, d)+\int fd\mu\}.$$ 
It is proved  by showing for any $\epsilon >0$, there exists $\mu\in M(X)$ such that  $\over-\epsilon < {{F}(\mu, d)+\int fd\mu}$, or equivalently $F(\mu,d)+\int f_1d\mu+\epsilon >0$, where $f_1=f-\over$.

Let $\mathcal{C}:=\{g\in C(X,\R):{\rm \overline{{mdim}}}_M(T,-g,d)\leq 0\}$. Then  $\mathcal{C}$  is a closed convex subset of $C(X,\R)$, where the convexity  of $\mathcal{C}$ follows from Proposition \ref{prop 2.2}, (5). Recall  $\mathcal{A}=\{f\in C(X,\R):{\rm \overline{{mdim}}}_M(T,-f,d)= 0\}$. Then $\mathcal{A}\subset \mathcal{C}$. Since ${\rm \overline{{mdim}}}_M(T,f_1,d)=0$, one has $-(f_1+\frac{\epsilon}{2})\notin \mathcal{C}$ and hence $-f_1\notin \mathcal{C}+\frac{\epsilon}{2}$. Let $K_1=\{-f_1\}$ and $K_2=\mathcal{C}+\frac{\epsilon}{2}$. Applying  the  separation theorem of convex sets to  the sets $K_1, K_2$,there exists a  continuous real-valued linear  functional  $L$ on $C(X,\R)$ such that $L(-f_1)<L(g)$ for  all $g\in \mathcal{C}+\frac{\epsilon}{2}.$ This yields that $\inf_{g\in \mathcal{C}+\frac{\epsilon}{2}}L(g)+L(f_1)\geq 0$.

Next we show  that $L$ is a positive linear functional  on $C(X,\R)$ and $L(1)>0$.
Let $g\in C(X,\R)$ with $g\geq 0$. For any $c>0$, one has
\begin{align*}
&{\rm \overline{{mdim}}}_M(T,-(cg+1+{\rm \overline{{mdim}}}_M(T,X,d)),d)\\
= &{\rm \overline{{mdim}}}_M(T,-cg,d)-1-{\rm \overline{{mdim}}}_M(T,X,d)\\
\leq &{\rm \overline{{mdim}}}_M(T,0,d)-1-{\rm \overline{{mdim}}}_M(T,X,d)=-1<0, 
\end{align*}
 which  implies that  $cg+1+{\rm \overline{{mdim}}}_M(T,X,d)\in \mathcal{C}$. Thus, $L(-f_1)<cL(g)+L(1+{\rm \overline{{mdim}}}_M(T,X,d))$. This shows  $L(g)\geq 0$; otherwise we have $L(-f_1)=-\infty$ by letting $c\to \infty$. Since  $L$ is not  constantly  equal to zero,  we can choose $g\in C(X,\R)$ such that $L(g)>0$ with the supremum norm $||g||<1$. Then $L(1)=L(g)+L(1-g)>0$.
 
 By  Riesz representation theorem,  there exists a $\mu \in M(X)$ such that
 $$\frac{L(g)}{L(1)}=\int gd\mu$$
 for all $g\in C(X,\R).$ Since $\mathcal{A}+\frac{\epsilon}{2}\subset  \mathcal{C}+\frac{\epsilon}{2}$, one has
\begin{align*}
 F(\mu,d)+\epsilon&=\inf_{f\in \mathcal{A}}\int fd\mu+\frac{\epsilon}{2}+\frac{\epsilon}{2}\\
 &=\inf_{f\in\mathcal{A}+\frac{\epsilon}{2}}\int fd\mu+\frac{\epsilon}{2}\geq \inf_{f\in\mathcal{C}+\frac{\epsilon}{2}}\int fd\mu+\frac{\epsilon}{2}.
\end{align*}
Therefore, \begin{align*}
F(\mu,d)+\int f_1d\mu+\epsilon &\geq \frac{L(f_1)}{L(1)}+\inf_{f\in\mathcal{C}+\frac{\epsilon}{2}}\int fd\mu+\frac{\epsilon}{2}\\
& = \frac{L(f_1)}{L(1)}+\inf_{g\in \mathcal{C}+\frac{\epsilon}{2}}\frac{L(g)}{L(1)}+\frac{\epsilon}{2}\\
&= \frac{1}{L(1)}(L(f_1)+\inf_{g\in \mathcal{C}+\frac{\epsilon}{2}}{L(g)}) +\frac{\epsilon}{2}>0.
\end{align*}
The fact that $F(\mu,d)$ is upper semi-continuous on $M(X)$ guarantees  the supremum can be attained for some measure $\mu \in M(X)$.

Step 2. we  show 
$$ \over=\max_{\mu \in M(X,T)}\{{F}(\mu, d)+\int fd\mu\}.$$
By step 1,  we can choose  $\mu_0 \in M(X)$ such that  $\over=F(\mu_0,d)+\int fd\mu_0$. By Proposition
\ref{prop 3.4} and Step l, for all $g\in C(X,R)$ one has
\begin{align*}
F(\mu_0,d)+\int fd\mu_0&={\rm \overline{mdim}}_M(T,f,d)\\
&={\rm \overline{mdim}}_M(f+g\circ T-g)\\
&\geq  F(\mu_0,d)+\int f+g\circ T-gd\mu_0.
\end{align*} 
This shows  $\int gd\mu_0 \geq  \int g\circ Td\mu_0$. Thus,  one has $\mu_o \in M(X,T)$ and  $$\over= F(\mu_0,d)+\int fd\mu_0\leq  \sup_{\mu \in M(X,T)}\{F(\mu,d)+\int fd\mu\}.$$
By Step 1, this completes the proof.

\end{proof}

\begin{rem}
Following the line of the proof of Theorem 1.1, we can not obtain an analogous result for lower metric mean dimension with potential  due to the lack of convexity on $C(X,\R)$.
\end{rem}

\section{Equilibrium states }
In this section, we  introduce  the notion of equilibrium states for upper metric mean dimension with potential to characterize the  members of $M(X)$ that attain the supremum.

The following notion is  an  analogue of the concept of equilibrium state for topological pressure in  thermodynamic formalism  \cite{w82} (or known as maximal entropy measure for topological  entropy).

\begin{df}
Let $(X,T)$ be a TDS with a metric $\d$ staisfying ${\rm \overline{mdim}}_M(T,X,d)<\infty$, and let $\f$.  A measure $\mu \in M(X,T)$ is said to be  an \emph{equilibrium state} for $f$ with respect to $d$ if $$\over=F(\mu,d)+\int fd\mu.$$  
\end{df}

The set of  all equilibrium states for $f$ with respect to $d$ is  denoted by $M_f(T,X,d)$.
Recall that a measure  $\mu$ on $X$is said to be a  finite signed  measure  if  the  map $\mu: \mathcal{B}(X)\longrightarrow \R$ is countably additive, where $ \mathcal{B}(X)$ is the Borel sigma algebra of $X$.  We will see that  the notion of equilibrium state is  closely tied with the notion  of \emph{tangent functional} to  the convex function  ${\rm \overline{{mdim}}}_M(T,\cdot,d)$.

\begin{df}
Let $(X,T)$ be a TDS with a metric $\d$ staisfying ${\rm \overline{mdim}}_M(T,X,d)<\infty$, and let $\f$.  A \emph{tangent functional} to ${\rm \overline{{mdim}}}_M(T,\cdot,d)$ at $f$ with respect to $d$ is a  finite signed measure $\mu$ such that $${\rm \overline{{mdim}}}_M(T,f+g,d)-\over\geq \int gd\mu$$ for all $g\in C(X,\R)$. 
\end{df}
The set of  all tangent functionals to ${\rm \overline{{mdim}}}_M(T,\cdot,d)$ at $f$ with respect to $d$ is  denoted by $t_f(T,X,d)$.

\begin{rem}
(1)  The Riesz representation  theorem  states that  $C(X,\R)^{*}$ can be  identified with  the set of all  finite signed measures on $X$. Therefore, one can understand  the  tangent functional $\mu$ to ${\rm \overline{{mdim}}}_M(T,\cdot,d)$ at $f$ w.r.t. $d$  as  an element $L$ of $C(X,\R)^{*}$ such that $L(g)\leq{\rm \overline{{mdim}}}_M(T,f+g,d)-\over$ for all $g\in C(X,\R).$

(2) Consider the  linear space $\{c:c\in\R\}$. We have $$\gamma(c):=\int cd\mu \leq {\rm \overline{{mdim}}}_M(T,f+c,d)-\over$$ for all $c\in \mathbb{R}$.  Using Hahn-Banach theorem one can extend  $\gamma$ on $\R$ to  an element of $C(X,\R)^{*}$ dominated by the convex function $g\mapsto{\rm \overline{{mdim}}}_M(T,f+g,d)-\over$. Thus,  for each $f\in C(X,\R)$,  $t_f(T,X,d)$ is not  empty. 
\end{rem}

\begin{prop}
Let $(X,T)$ be a TDS with a metric $\d$  such that ${\rm \overline{{mdim}}}_M(T,X,d)<\infty$ and $\f$. Then 

(1) $M_f(T,X,d)$ is a  non-empty compact  convex subset of $M(X,T)$.

(2) \begin{align*}
M_f(T,X,d)&=t_f(T,X,d)\\
&=\cap_{n\geq 1} \overline{\{\mu \in M(X,T):  F(\mu,d)+\int fd\mu>\over-\frac{1}{n}\}}.
\end{align*}

(3) There exists a dense  subset $\mathcal{D}$ of $C(X,\R)$  such that for any $f\in \mathcal{D}$, $M_f(T,X,d)$  has  a unique equilibrium  state.
\end{prop}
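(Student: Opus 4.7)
The plan is to handle parts (1)--(3) in turn, leveraging Theorem~\ref{thm 1.1} and the fact from Proposition~\ref{prop 3.6} that $\mu \mapsto F(\mu) + \int f\,d\mu$ is upper semi-continuous and concave on the compact convex space $M(X)$. For (1), non-emptiness of $M_f(T,X,d)$ is automatic, since an upper semi-continuous function attains its supremum on a compact space and Theorem~\ref{thm 1.1} equates that supremum with $\overline{mdim}_M(T,X,f,d)$. Compactness follows because $M_f(T,X,d)$ is the super-level set at height $\overline{mdim}_M(T,X,f,d)$ of the upper semi-continuous map $\mu \mapsto F(\mu) + \int f\,d\mu$, hence closed in the compact space $M(X)$. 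Convexity holds because the set where a concave function attains its maximum is convex.

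For (2), I would first prove $M_f(T,X,d) = t_f(T,X,d)$. If $\mu \in M_f(T,X,d)$, then for every $g \in C(X,\mathbb{R})$, applying Theorem~\ref{thm 1.1} at $f+g$ gives
\[
\overline{mdim}_M(T,X,f+g,d) \geq F(\mu) + \int(f+g)\,d\mu = \overline{mdim}_M(T,X,f,d) + \int g\,d\mu,
\]
so $\mu \in t_f(T,X,d)$. Conversely, for $\mu \in t_f(T,X,d)$, testing against $g = \pm 1$ together with Proposition~\ref{prop 2.2}(2) gives $\mu(X) = 1$, and testing against any non-positive $g$ together with Proposition~\ref{prop 2.2}(1) gives $\int g\,d\mu \leq 0$, so $\mu \in M(X)$. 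Then for any $h \in \mathcal{A}$, substituting $g = -h - f$ in the tangent inequality and using $\overline{mdim}_M(T,X,-h,d)=0$ produces $\int h\,d\mu \geq \overline{mdim}_M(T,X,f,d) - \int f\,d\mu$; taking the infimum over $h \in \mathcal{A}$ puts $\mu$ in $M_f(T,X,d)$. For the second equality, set $\Omega_n := \{\mu \in M(X) : F(\mu) + \int f\,d\mu > \overline{mdim}_M(T,X,f,d) - 1/n\}$; the inclusion $M_f(T,X,d) \subseteq \bigcap_n \overline{\Omega_n}$ is immediate from $M_f(T,X,d) \subseteq \Omega_n$, while for $\mu \in \bigcap_n \overline{\Omega_n}$ I would pick a sequence in $\Omega_n$ converging to $\mu$, pass to the limit by upper semi-continuity of $\mu \mapsto F(\mu) + \int f\,d\mu$, and then send $n \to \infty$.

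For (3), since $\overline{mdim}_M(T,\cdot,d) : C(X,\mathbb{R}) \to \mathbb{R}$ is convex by Proposition~\ref{prop 2.2}(5) and Lipschitz (hence continuous) on the separable Banach space $C(X,\mathbb{R})$, the classical Mazur-type theorem (precisely as used in Walters~\cite{w82} for ordinary equilibrium states) guarantees that the set $\mathcal{D}$ of points at which this convex function admits a unique tangent functional is residual, hence dense, in $C(X,\mathbb{R})$. Combined with the identification $M_f(T,X,d) = t_f(T,X,d)$ from part (2), this yields uniqueness of the equilibrium state for each $f \in \mathcal{D}$. The step I expect to require the most care is the reverse inclusion $t_f(T,X,d) \subseteq M_f(T,X,d)$ in (2): first verifying that an a priori arbitrary tangent functional (a finite signed measure) actually lies in $M(X)$, for which Proposition~\ref{prop 2.2}(1)--(2) are essential, and then extracting $F(\mu) + \int f\,d\mu \geq \overline{mdim}_M(T,X,f,d)$ from the tangent inequality via the infimum definition $F(\mu) = \inf_{h \in \mathcal{A}} \int h\,d\mu$. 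Everything else is a routine assembly of Theorem~\ref{thm 1.1}, Propositions~\ref{prop 2.2} and~\ref{prop 3.6}, and standard functional-analytic facts.
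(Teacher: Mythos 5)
Your proposal is correct and follows essentially the same route as the paper: upper semi-continuity and concavity of $\mu\mapsto F(\mu)+\int f\,d\mu$ for (1), the two-sided comparison between equilibrium states and tangent functionals (including the verification that a tangent functional is a probability measure via monotonicity and the constant-shift property) for (2), and the Dunford--Schwartz/Mazur uniqueness theorem for (3). The only cosmetic difference is that you derive $F(\mu)\geq\overline{{mdim}}_M(T,f,d)-\int f\,d\mu$ by substituting $g=-h-f$ for $h\in\mathcal{A}$ directly, whereas the paper routes this through the identity $(3\cdot3)=(3\cdot4)$ of Theorem 3.3; both are valid.
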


\begin{proof}
(1) By Theorem 1.1, the set $M_f(T,X,d)$ is not nonempty. Let $\mu,\nu\in M_f(T,X,d)$  and $p\in [0,1]$. Then
\begin{align*}
\over&= p F(\mu,d)+(1-p) F(\nu)+p\int fd\mu +(1-p)\int fd\nu\\
&\leq F(p\mu+(1-p)\nu) +\int fdp\mu+(1-p)\nu~~\text{by Proposition \ref{prop 3.6}}\\
&\leq \over~~ \text{by Theorem \ref{thm 1.1}},
\end{align*} 
 which implies that  $M_f(T,X,d)$ is convex.
 
  Let  $\mu_n \to \mu$ with $\mu_n \in M_f(T,X,d)$.  As $F(\mu,d)$ is upper semi-continuous, we have 
  \begin{align*}
  \over= \limsup_{n\to \infty} \{F(\mu_n)+\int fd\mu_n\}\leq  F(\mu)+\int fd\mu,
  \end{align*}
which implies that $\mu\in M_f(T,X,d)$. Therefore,the set $ M_f(T,X,d)$ is closed and hence compact.

(2)  $M_f(T,X,d)=\cap_{n\geq 1} \overline{\{\mu \in M(X,T):  F(\mu,d)+\int fd\mu >\over-\frac{1}{n}\}}$ is clear. It remains to show$M_f(T,X,d)=t_f(T,X,d)$.

Let $\mu \in M_f(T,X,d)$. Then we have
\begin{align*}
{\rm \overline{{mdim}}}_M(T,f+g,d)-\over&\geq F(\mu,d)+\int f+gd\mu -(F(\mu,d)+\int fd\mu)\\
&=\int gd\mu.
\end{align*}
This  yields that  $\mu \in t_f(T,X,d)$ and hence $ M_f(T,X,d) \subset t_f(T,X,d)$.

On the other hand,  let $\mu \in t_f(T,X,d)$.  For any $g\in C(X,\R)$ with $g\geq 0$ and $\epsilon>0$,  we have
\begin{align*}
\int g+\epsilon d\mu&=-\int -(g+\epsilon)d\mu\\
&\geq -{\rm \overline{{mdim}}}_M(T,f-(g+\epsilon),d)+\over\\
&\geq  -{\rm \overline{{mdim}}}_M(T,f-\inf(g+\epsilon),d)+\over\\
&=\inf(g+\epsilon)>0.
\end{align*}
This shows $\mu$ is a non-negative measure on $X$.For $n\in \Z^{+}$,  we have
\begin{align*}
\int nd\mu \leq {\rm \overline{{mdim}}}_M(T,f+n,d)-\over=n,
\end{align*}
which tells us $\mu(X)\leq1$. If $n$ is a negative  integer number, one can  similarly obtain $\mu(X)\geq 1$. This shows $\mu \in M(X).$
For any $g\in C(X,\R)$, one has
$${\rm \overline{{mdim}}}_M(T,f+g,d)-\int f+gd\mu\geq \over-\int fd\mu.$$
The arbitrariness of $g$ implies that  for any $h\in C(X,\R)$,
$${\rm \overline{{mdim}}}_M(T,h,d)-\int hd\mu\geq \over-\int fd\mu.$$
So  $F(\mu,d)\geq\over-\int fd\mu$ by  Definition \ref{def 3.4}. The  Step 2  in the proof of Theorem \ref{thm 1.1}  asserts that every  $\mu \in M(X)$ attaining the supremum  must be$T$-invariant. This shows $\mu \in M_f(T,X,d)$ and hence $t_f(T,X,d)\subset M_f(T,X,d)$.

(3) The  well-known theorem \cite[p.450]{ds88} states that  for a convex function on a separable Banach space,  there exists  a  unique  tangent  functional at a  dense set  of points.  Applying  the theorem to ${\rm \overline{{mdim}}}_M(T,\cdot,d)$  and  $C(X,\R)$, together with  the relation $M_f(T,X,d)=t_f(T,X,d)$ in (2), this finishes the proof.
\end{proof}

\begin{cor}
Let $(X,T)$ be a  dynamical system with a metric $\d$  such that ${\rm \overline{{mdim}}}_M(T,X,d)<\infty$ and $\f$. Suppose that $\mu_s\in M(X,T)$, $s\in \R$, is an equilibrium state for $sf$ with $\int fd\mu_s\not=0$. Then 
\begin{align*}
{\rm \overline{{mdim}}}_M(T,sf,d)=0~~\text{ if and only if}~~s=-\frac{F(\mu_s,d)}{\int fd\mu_s}.
\end{align*}
\end{cor}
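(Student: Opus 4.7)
The plan is to exploit the defining equation of an equilibrium state directly; this is essentially a one-line rearrangement. First I would write down what $\mu_s$ being an equilibrium state for the potential $sf$ means, namely
\begin{equation*}
\overline{mdim}_M(T,sf,d) = F(\mu_s) + \int sf\, d\mu_s = F(\mu_s) + s\int f\, d\mu_s,
\end{equation*}
by Definition 4.1 applied to the potential $sf$ and the linearity of the integral in $s$.

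From there the equivalence is immediate: $\overline{mdim}_M(T,sf,d) = 0$ holds if and only if $F(\mu_s) + s\int f\, d\mu_s = 0$, and since we assume $\int f\, d\mu_s \neq 0$, we may divide by this quantity to obtain $s = -F(\mu_s)/\int f\, d\mu_s$. Conversely, substituting this value of $s$ into the displayed identity produces $\overline{mdim}_M(T,sf,d) = 0$.

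There is essentially no obstacle here: the corollary is a direct algebraic rewriting of the equilibrium condition, and the hypothesis $\int f\, d\mu_s \neq 0$ is used only to legitimize the division. The only subtle point worth noting is that we are implicitly working in the finite regime $\overline{mdim}_M(T,X,d) < \infty$ so that $F(\mu_s) \in \mathbb{R}$ and the arithmetic is valid; otherwise by Remark 3.5(2) both sides of the equilibrium equation would be infinite and the statement would be vacuous. Thus the proof reduces to recording the equilibrium identity and solving a linear equation in $s$.
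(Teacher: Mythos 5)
Your proof is correct and coincides with the paper's: both simply record the equilibrium identity $\overline{{mdim}}_M(T,sf,d)=F(\mu_s)+s\int f\,d\mu_s$ and solve the resulting linear equation in $s$, with $\int f\,d\mu_s\neq 0$ justifying the division. The remark on finiteness is a harmless extra, since $\overline{{mdim}}_M(T,X,d)<\infty$ is already assumed in the statement.
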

\begin{proof}
It is  straightforward to  use the  ${\rm \overline{{mdim}}}_M(T,sf,d)=F(\mu_s,d)+s\int fd\mu_s$.
\end{proof}

For  thermodynamic formalism of infinite entropy systems, the  present authors \cite{ycz22a} established Bowen's equations for upper metric mean dimension with potential to associate  different types of  metric mean dimension with potentials  within  dimension  theory of  dynamical systems.  As an application,  we will see that the  BS  metric mean dimension can be computed via equilibrium states.
\begin{cor}
Let $(X,T)$ be a dynamical system with a metric $\d$  such that ${\rm \overline{{mdim}}}_M(T,X,d)<\infty$ and $\f$ with $f>0$.  Then there exists  the unique $s_0\geq 0$ such that  for any  $\mu \in M_{-s_0f}(T,X,d)$, we have  $${\rm  \overline{BSmdim}}_{M}(f,X,d)=\frac{F(\mu_{s_0},d)}{\int fd\mu_{s_0}},$$
where  ${\rm  \overline{BSmdim}}_{M}(f,X,d)$  is  BS  metric mean dimension on $X$ with respect to $f$. See \cite[Definition 3.8]{ycz22a} for  its precise definition.
\end{cor}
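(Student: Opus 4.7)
The plan is to reduce everything to analyzing the one-variable function $\varphi\colon[0,\infty)\to\R$ defined by $\varphi(s):=\overline{mdim}_M(T,-sf,d)$. By the variational principle (Theorem \ref{thm 1.1}), we may write
$$\varphi(s)=\sup_{\mu\in M(X)}\Bigl(F(\mu)-s\int f\,d\mu\Bigr),$$
which exhibits $\varphi$ as a supremum of affine functions of $s$, and so is convex. Proposition \ref{prop 2.2}(5) further shows $\varphi$ is $||f||$-Lipschitz in $s$, hence continuous.

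For existence of $s_0$, I would note first that $\varphi(0)=\overline{mdim}_M(T,X,d)\geq 0$, while Proposition \ref{prop 3.6} gives $F(\mu)\leq \overline{mdim}_M(T,X,d)<\infty$ for all $\mu\in M(X)$; combined with $m:=\min_{x\in X}f(x)>0$ (ensured by compactness of $X$ and $f>0$), this yields
$$\varphi(s)\leq \overline{mdim}_M(T,X,d)-sm\longrightarrow -\infty\quad\text{as } s\to\infty.$$
The intermediate value theorem then produces some $s_0\geq 0$ with $\varphi(s_0)=0$. For uniqueness, suppose $0\leq s_1<s_2$ and $\varphi(s_2)=0$; pick $\mu_n\in M(X)$ with $F(\mu_n)-s_2\int f\,d\mu_n\to 0$. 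Then
$$F(\mu_n)-s_1\int f\,d\mu_n=\bigl(F(\mu_n)-s_2\int f\,d\mu_n\bigr)+(s_2-s_1)\int f\,d\mu_n\geq (s_2-s_1)m+o(1),$$
so $\varphi(s_1)\geq (s_2-s_1)m>0$, ruling out two distinct zeros.

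For the closed-form formula, I would fix any $\mu\in M_{-s_0f}(T,X,d)$; Definition 4.1 combined with $\varphi(s_0)=0$ yields
$$0=F(\mu)+\int(-s_0f)\,d\mu=F(\mu)-s_0\int f\,d\mu,$$
and since $f>0$ forces $\int f\,d\mu\geq m>0$, we may divide to obtain $s_0=F(\mu)/\int f\,d\mu$. It remains to identify $s_0$ with $\overline{BSmdim}_{M,X,T}(f,d)$, and here I would appeal to the Bowen-type equation for upper metric mean dimension with potential established in \cite{ycz22a}: that work characterizes $\overline{BSmdim}_{M,X,T}(f,d)$ precisely as the unique nonnegative root of $s\mapsto\overline{mdim}_M(T,-sf,d)$, which must then coincide with the $s_0$ constructed above. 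The main obstacle I foresee is essentially bookkeeping, namely confirming that the normalization of the Bowen equation in \cite{ycz22a} matches the uniqueness clause proved here; every other step is a direct consequence of the variational principle together with the defining property of an equilibrium state.
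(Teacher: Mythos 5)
Your proof is correct, and it is somewhat more self-contained than the paper's. The paper disposes of the corollary in two lines: it cites \cite[Theorem 1.3]{ycz22a} for the fact that $s_0=\overline{BSmdim}_{M,X,T}(f,d)$ is the unique nonnegative root of $\overline{{mdim}}_M(T,-sf,d)=0$, and then reads off $s_0=F(\mu_{s_0})/\int f\,d\mu_{s_0}$ from the equilibrium-state identity exactly as you do in your final step. What you add is an independent proof of existence and uniqueness of the root: continuity (via the Lipschitz bound of Proposition \ref{prop 2.2}(5)), the decay estimate $\overline{{mdim}}_M(T,-sf,d)\le\overline{{mdim}}_M(T,X,d)-s\min f$ coming from Theorem \ref{thm 1.1} and Proposition \ref{prop 3.6}, and the quantitative separation argument showing the function is strictly positive to the left of any zero. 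This buys a verification of the ``uniqueness clause'' that the paper takes on faith from the reference, and it is a legitimate concern since the cited Bowen-equation theorem is the only place where $\overline{BSmdim}$ is pinned down; the one step you cannot avoid outsourcing is the identification of the unique root with $\overline{BSmdim}_{M,X,T}(f,d)$ itself, because that quantity is only defined in \cite{ycz22a}, and there your appeal matches the paper's. (Minor remark: the convexity of $s\mapsto\overline{{mdim}}_M(T,-sf,d)$ is never actually used in your argument --- continuity plus the separation estimate suffices --- so it could be dropped.)
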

\begin{proof}
By \cite[Theorem 1.3]{ycz22a}, we know that $s_0={\rm  \overline{BSmdim}}_{M}(f,X,d)\\\geq0$ is the  unique root of  the equation $${\rm \overline{{mdim}}}_M(T,-sf,d)=0.$$  For any  $\mu \in M_{-s_0f}(T,X,d)$, we have 
$${\rm \overline{{mdim}}}_M(T,-s_0f,d)=F(\mu_{s_0}, d)-s_0\int fd\mu_{s_0}.$$ Then${\rm  \overline{BSmdim}}_{M}(f,X,d)=\frac{F(\mu_{s_0},d)}{\int fd\mu_{s_0}}$ by the uniqueness of  the root of the equation.
\end{proof}
\section*{Acknowledgement} 

\noindent The work was supported by the
National Natural Science Foundation of China (Nos.12071222 and 11971236), China Postdoctoral Science Foundation (No.2016M591873),
and China Postdoctoral Science Special Foundation (No.2017T100384). The work was also funded by the Priority Academic Program Development of Jiangsu Higher Education Institutions.  We would like to express our gratitude to Tianyuan Mathematical Center in Southwest China(11826102), Sichuan University and Southwest Jiaotong University for their support and hospitality.


\end{document}